\title{The $a$-numbers of Jacobians of Suzuki curves}
\author{Holley Friedlander\and Derek Garton\and Beth Malmskog\and Rachel Pries\and Colin Weir}
\DeclareMathOperator{\Jac}{Jac}
\DeclareMathOperator{\Hom}{Hom}
\DeclareMathOperator{\car}{\mathcal{C}}
\DeclareMathOperator{\ch}{char}
\newcommand{\F}{\mathbb{F}}
\newcommand{\Hnot}{H^0(S_m,\Omega^1)}
\newtheorem{theorem}{Theorem}[section]
\newtheorem{lemma}[theorem]{Lemma}
\newtheorem{proposition}[theorem]{Proposition}
\newtheorem{predefinition}[theorem]{Definition}
\newtheorem{preremark}[theorem]{Remark}
\newenvironment{remark}{\begin{preremark}\rm}{\end{preremark}}
\newtheorem{prenotation}[theorem]{Notation}
\newtheorem{preexample}[theorem]{Example}
\newenvironment{example}{\begin{preexample}\rm}{\end{preexample}}
\newtheorem{preclaim}[theorem]{Claim}
\newtheorem{prequestion}[theorem]{Question}
\newenvironment{question}{\begin{prequestion}\rm}{\end{prequestion}}
\def\emppsubsection{\@startsection{subsection}{2}{\z@}{-3.25ex plus -1ex minus -.2ex}{-1em}{\bf}}
\begin{document}

\maketitle

\begin{abstract}
For $m \in {\mathbb N}$, let $S_m$ be the Suzuki curve defined over $\F_{2^{2m+1}}$. 
It is well-known that $S_m$ is supersingular, but the $p$-torsion group scheme of its Jacobian is not known.
The $a$-number is an invariant of the isomorphism class of the $p$-torsion group scheme.
In this paper, we compute a closed formula for the $a$-number of $S_m$ using the action of the Cartier operator on $\Hnot$.\\
Keywords: Suzuki curve, maximal curve, Jacobian, p-torsion, a-number.\\
MSC: 11G20, 14G50, 14H40.
\end{abstract}

%%%%%%%%%%%%%%%%%%%%%%%%%%%%%%%%%%%%%%%%%%%%
\section{Introduction} \label{Introduction}
%%%%%%%%%%%%%%%%%%%%%%%%%%%%%%%%%%%%%%%%%%%%

Let $m\in {\mathbb N}$, $q=2^{2m+1}$, and $q_0=2^m$. 
The \emph{Suzuki curve} $\mathcal{S}_m\subset\mathbb{P}^2$ is defined over $\F_q$
by the homogeneous equation:
\[
W^{q_0}(Z^q + ZW^{q-1}) = Y^{q_0}(Y^q + YW^{q-1}).
\]
This curve is smooth and irreducible with genus $g=q_0(q-1)$ and it has exactly one point at infinity  \cite[Proposition 1.1]{HansenStich}.
The number of points on the Suzuki curve over $\F_q$ is $\#S_m\left(\F_q\right)=q^2+1$; this number 
is optimal in that it reaches Serre's improvement to the Hasse-Weil bound \cite[Proposition 2.1]{HansenStich}. 

In fact, $S_m$ is the unique $\mathbb{F}_q$-optimal curve of genus $g$ \cite{FTunique}.
This shows that $S_m$ is the Deligne-Lusztig variety of dimension $1$ associated with the group $Sz(q)= {}^2B_2(q)$ 
\cite[Proposition 4.3]{H92}.
The curve $S_m$ has the Suzuki group $Sz(q)$ as its automorphism group; the order of $Sz(q)$ is $q^2(q-1)(q^2+1)$ 
which is very large compared with $g$.
Because of the large number of rational points relative to their genus, the Suzuki curves provide good examples of Goppa codes 
\cite[Section 4.3]{GK08}, \cite{GKT}, \cite{HansenStich}. 

The $L$-polynomial of $S_m$ is $(1+\sqrt{2q} t +q t^2)^g$ \cite[Proposition 4.3]{H92}.
It follows that $S_m$ is supersingular for each $m \in {\mathbb N}$.  
This fact implies that the Jacobian ${\rm Jac}(S_m)$ is isogenous to a product of 
supersingular elliptic curves and that ${\rm Jac}(S_m)$ has no $2$-torsion points over $\overline{\F}_2$.  
However, there are still open questions about ${\rm Jac}(S_m)$.
In this paper, we address one of these by computing a closed formula for the $a$-number of ${\rm Jac}(S_m)$.   

The $a$-number is an invariant of the $2$-torsion group scheme ${\rm Jac}(S_m)[2]$.
Specifically, if $\alpha_{2}$ denotes the kernel of Frobenius on the additive group $\mathbb{G}_{a}$, 
then the $a$-number of $S_m$ is $a(m) = {\rm dim}_{\overline{\F}_2}\text{Hom}(\alpha_{2},\text{Jac}(S_m)[2])$. 
It equals the dimension of the intersection of ${\rm Ker}(F)$ and ${\rm Ker}(V)$ on the 
Dieudonn\'e module of ${\rm Jac}(S_m)[2]$.
Having a supersingular Newton polygon places constraints upon the $a$-number but does not determine it.
The $a$-number also gives partial information about the decomposition of ${\rm Jac}(S_m)$ 
into indecomposable principally polarized abelian varieties, Lemma~\ref{decompose},
and about the Ekedahl-Oort type of ${\rm Jac}(S_m)[2]$, see Section~\ref{Corollaries}.

In Section~\ref{Derek}, we prove that the $a$-number of $S_m$ is $a(m)=q_0(q_0+1)(2q_0+1)/6$, see Theorem~\ref{maintheorem}.
The proof uses the action of the Cartier operator on $\Hnot$ as computed in Section~\ref{Colin}.

Author Pries was partially supported by NSF grant DMS-11-01712. 
We would like to thank the NSF for sponsoring the research workshop for graduate students at Colorado State University in June 2011 
where the work on this project was initiated.
We would like to thank Amy Ksir and the other workshop participants for their insights.

%%%%%%%%%%%%%%%%%%%%%%%%%%%%%%%%%%%%%%%%%%%%
\section{The $a$-number} \label{preliminaries}
%%%%%%%%%%%%%%%%%%%%%%%%%%%%%%%%%%%%%%%%%%%%

Suppose $A$ is a principally polarized abelian variety of dimension $g$ defined over an algebraically closed field $k$ of characteristic $p > 0$.
For example, $A$ could be the Jacobian of a $k$-curve of genus $g$.
Consider the multiplication-by-$p$ morphism $[p]:A \to A$ which is a finite flat morphism of degree $p^{2g}$.
It factors as $[p]=V \circ F$.  Here, $F:A \to A^{(p)}$ is the relative Frobenius morphism 
coming from the $p$-power map on the structure sheaf; it is purely inseparable of degree $p^g$.  
The Verschiebung morphism $V:A^{(p)} \to A$ is the dual of $F$.  

The kernel of $[p]$ is $A[p]$, the $p$-torsion of $A$, which is a quasi-polarized $BT_1$ group scheme.  
In other words, it is a quasi-polarized finite commutative group scheme annihilated by $p$, again having 
morphisms $F$ and $V$.  The rank of $A[p]$ is $p^{2g}$.
These group schemes were classified independently by
Kraft (unpublished)~\cite{Kraft} and by Oort~\cite{O:strat}.
A complete description of this topic can be found in~\cite{M:group} or \cite{O:strat}.

Two invariants of (the $p$-torsion of) an abelian variety are the $p$-rank and $a$-number.
The \emph{$p$-rank} of $A$ is $r(A)=\dim_{\mathbb{F}_p}\left(\Hom\left(\mu_p,A[p]\right)\right)$,
where $\mu_p$ is the kernel of Frobenius on the multiplicative group $\mathbb{G}_m$.
Then $p^{r(A)}$ is the cardinality of $A[p]\left(\overline{\mathbb{F}}_p \right)$.
The \emph{$a$-number} of $A$ is $a(A)=\dim_k\left(\Hom\left(\alpha_p, A[p]\right)\right)$,
where $\alpha_p$ is the kernel of Frobenius on the additive group $\mathbb{G}_a$.
It is well-known that $1 \leq a(A) +r(A) \leq g$. 
Another definition for the $a$-number is 
\[a(A)={\rm dim}_{{\mathbb F}_p}({\rm Ker}(F) \cap {\rm Ker}(V)).\]

If $X$ is a (smooth, projective, connected) $k$-curve, then the $a$-number of $A={\rm Jac}(X)$ 
equals the dimension of the kernel of the Cartier operator $\car$ on $H^0(X,\Omega^1)$ \cite[5.2.8]{LO}.
The reason for this is that the action of $\car$ on $H^0(X,\Omega^1)$ is the same as the action of $V$ on $V{\rm Jac}(X)[p]$.
This is the property that we use to calculate the $a$-number $a(m)$ of the Jacobian of the Suzuki curve $S_m$.

%%%%%%%%%%%%%%%%%%%%%%%%%%%%%%%%%%%%%%%%%%%%
\section{Regular 1-forms for the Suzuki curves} \label{Colin}
%%%%%%%%%%%%%%%%%%%%%%%%%%%%%%%%%%%%%%%%%%%%

In this section, we compute the action of the Cartier operator on the 
vector space of regular 1-forms for the Suzuki curves.

\subsection{Geometry of the Suzuki curves}

Let $m\in {\mathbb N}$, $q=2^{2m+1}$, and $q_0=2^m$. 
Consider the Suzuki curve $\mathcal{S}_m\subset\mathbb{P}^2$ defined over $\F_q$
by the homogeneous equation:
\[
W^{q_0}(Z^q + ZW^{q-1}) = Y^{q_0}(Y^q + YW^{q-1}).
\]

The curve $S_m$ is smooth and irreducible and has one point $P_\infty$ at infinity 
(when $W=Y=0$ and $Z=1$).
Consider the irreducible affine model of $S_m$ defined by the equation
\begin{equation}\label{Suzuki}
z^q + z = y^{q_0}(y^q + y)
\end{equation}
where $y:=Y/W$ and $z := Z/W$.

The following result is well-known, see e.g., \cite[Proposition 1.1]{HansenStich}.
We include an alternative proof that illustrates the geometry of some of the quotient curves of $S_m$
and an important point about the $a$-number.

\begin{lemma} \label{Lgenus}
The curve $S_m$ has genus $g=q_0(q-1)$.
\end{lemma}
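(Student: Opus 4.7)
The plan is to apply the Riemann-Hurwitz formula to the projection $\pi \colon S_m \to \mathbb{P}^1$ sending $(y, z) \mapsto y$. I would first show that $\pi$ is a Galois cover of degree $q$ with Galois group $(\F_q, +)$ acting on $S_m$ by $z \mapsto z + \alpha$: this uses the fact that $Z^q + Z$ is an additive polynomial with roots exactly $\F_q$, together with the irreducibility of $Z^q + Z - f(y)$ over $\F_q(y)$, where $f(y) := y^{q_0+q} + y^{q_0+1}$. Irreducibility follows because the pole order $q_0 + q$ of $f$ at $y = \infty$ is not divisible by $q$. Since $z^q + z = c$ has $q$ distinct roots for every $c \in \overline{\F}_2$, the cover $\pi$ is \'etale over $\mathbb{A}^1_y$, and all ramification is concentrated at the unique point $P_\infty$ above $y = \infty$. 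From total ramification one obtains $v_{P_\infty}(y) = -q$, and then matching pole orders in the equation $z^q + z = f(y)$ forces $v_{P_\infty}(z) = -(q_0 + q)$.

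The main obstacle is to compute the different exponent $d_{P_\infty}$. Because the Galois group is a $2$-group of order $q$, the ramification is wild with a non-trivial higher filtration, and the usual ``one-step'' Artin-Schreier formula $(n+1)(q-1)$ does not apply, since the pole order $n = q_0 + q$ of $f$ is divisible by $p = 2$. To handle this I plan to decompose $\pi$ as a tower of degree-$2$ Artin-Schreier covers by choosing an ascending flag of $\F_2$-subspaces $0 = H_0 \subset H_1 \subset \cdots \subset H_{2m+1} = \F_q$ and analyzing the intermediate quotient curves $S_m / H_i$; this realizes ``the geometry of some of the quotient curves'' mentioned in the lemma. The key technical point is to track the pole orders of the right-hand sides at each level, which simplify under the $p$-power substitutions $z \mapsto z + g$ used to reduce successive Artin-Schreier terms. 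The resulting computation gives $d_{P_\infty} = 2(q_0 + 1)(q - 1)$, corresponding to a single break at $2q_0 + 1$ in the lower-numbering filtration.

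Plugging this value into Riemann-Hurwitz yields
\[
2g - 2 = q(-2) + 2(q_0 + 1)(q - 1),
\]
and simplifying gives $g = q_0(q - 1)$, as claimed.
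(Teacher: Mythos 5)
Your route (Riemann--Hurwitz for the degree-$q$ Artin--Schreier cover $S_m \to \mathbb{P}^1_y$, with the different at $P_\infty$ computed from the wild ramification filtration) is genuinely different from the paper's proof, which never touches the ramification of the degree-$q$ cover: the paper identifies the $q-1$ degree-$2$ quotients $Z_i\colon z_i^2 - z_i = \mu_i f(y)$, invokes Kani--Rosen to get $\Jac(S_m)\sim\oplus_{i=1}^{q-1}\Jac(Z_i)$, reduces each $\mu_i f$ to odd degree $2q_0+1$ so that $g(Z_i)=q_0$, and concludes $g=(q-1)q_0$ by a dimension count. Your target numbers are in fact correct: the translation group $\F_q$ does have a single lower-numbering break at $2q_0+1$ at $P_\infty$, so $d_{P_\infty}=(2q_0+2)(q-1)$ and Riemann--Hurwitz gives $g=q_0(q-1)$. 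However, two steps of your argument are not established as written.

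First, your irreducibility criterion is false as a general principle: for $z^q+z=f$ with $q=p^n$, ``the pole order of $f$ is not divisible by $q$'' does not imply degree $q$. For example, over $\mathbb{F}_4(y)$ the equation $z^4+z=y^2+y$ has pole order $2$, not divisible by $4$, yet $(z^2+z)^2+(z^2+z)=z^4+z$ forces $z^2+z\in y+\mathbb{F}_2$, so the extension has degree $2$. The usable criteria are either a pole of order prime to $p$ (which fails here, since $q+q_0$ is even) or the statement that no nonzero multiple $\mu f$, $\mu\in\F_q^*$, lies in $\wp(\F_q(y))$ with $\wp(T)=T^2-T$; the latter does hold because each $\mu f$ reduces modulo $\wp$ to a polynomial of odd degree $2q_0+1$, but that argument must be made. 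Second, and more seriously, the heart of the proof --- the value $d_{P_\infty}=2(q_0+1)(q-1)$ --- is asserted rather than computed: ``tracking pole orders'' through a flag $H_0\subset\cdots\subset H_{2m+1}$ requires, at every intermediate quotient, writing the degree-$2$ step as a reduced Artin--Schreier extension, extracting its break, and combining via transitivity of the different, and none of this bookkeeping is carried out. A clean way to complete your approach is the conductor--discriminant formula: every nontrivial character of $\F_q$ factors through one of the $q-1$ quadratic quotients $z_\mu^2+z_\mu=\mu f(y)$, whose reduced pole order $2q_0+1$ gives conductor exponent $2q_0+2$, whence $d_{P_\infty}=(q-1)(2q_0+2)$ as you claim. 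Note that this completion uses exactly the quadratic-quotient data the paper uses; the paper simply feeds it into Kani--Rosen and thereby avoids the wild-ramification computation altogether.
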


\begin{proof}
The set $\F_q^*=\left\{\mu_1, \ldots, \mu_{q-1}\right\}$ can be viewed as a set of representatives for the 
$q-1$ cosets of ${\mathbb F}^*_2$ in ${\mathbb F}^*_q$.
The Suzuki curve has affine equation $z^q-z=f(y)$ where $f(y)=y^{q_0+q}+ y^{q_0+1} \in \F_2(y)$.
For $1 \leq i \leq q-1$, let $Z_{i}$ be the Artin-Schreier curve with equation 
$z_i^2-z_i = \mu_i f(y)$.
As seen in \cite[Proposition 1.2]{GS91}, the set $\{Z_i \to {\mathbb P}^1_y \ \mid \ 1 \leq i \leq q-1\}$ is exactly
the set of degree $2$ covers $Z \to {\mathbb P}^1_y$ which are quotients of $S_m \to {\mathbb P}^1_y$.  
By \cite[Proposition 3]{GP05}, an application of \cite[Theorem C]{KR89}, there is an isogeny
\[\Jac{(S_m)} \sim \oplus_{i=1}^{q-1}{\Jac{(Z_{i})}}.\]

By Artin-Schreier theory, $\mu_i f(y)$ can be modified by any polynomial of the form 
$T^2- T$ for $T \in {\overline \F}_2[y]$ without changing the ${\overline \F}_2$-isomorphism class of the Artin-Schreier cover $Z_i \to {\mathbb P}^1_y$. 
Thus $Z_i$ is isomorphic to an Artin-Schreier curve with equation
$z_i^2- z_i=h_i(y)$ for some $h_i(y) \in \overline{\F}_2[y]$ with degree 
$2q_0+1={\rm max}\{(q_0+q)/q_0, q_0+1\}$.
For $1 \leq i \leq q-1$, 
the curve $Z_i$ is a $\mathbb{Z}/2$-cover of the projective line branched only at $\infty$, 
where it is totally ramified. 
Moreover, the break in the filtration of higher ramification groups in the lower numbering is at 
index ${\rm deg}(h_i(y))=2q_0+1$.
By \cite[VI.4.1]{Stich}, the genus of $Z_i$ is $q_0$.
Thus $g={\rm dim}(\Jac(S_m))=(q-1){\rm dim}(\Jac(Z_i))=q_0(q-1)$.
\end{proof} 

\begin{remark}
Consider the Artin-Schreier curve $Z_i: z_i^2- z_i = h_i(y)$
from the proof of Lemma~\ref{Lgenus}.
By \cite[Proposition 3.4]{elkinpries}, since ${\rm deg}(h_i)=2q_0+1 \equiv 1 \bmod 4$, 
the $a$-number of $Z_i$ is $q_0/2$.
Thus the $a$-number of $\oplus_{i=1}^{q-1}{\rm Jac}(Z_i)$ is $q_0(q-1)/2$, 
exactly half of the genus of $S_m$.
The fact that ${\rm Jac}(S_m)$ is isogenous to $\oplus_{i=1}^{q-1}{\rm Jac}(Z_i)$
gives little information about the $a$-number of $S_m$
since the $a$-number is not an isogeny invariant.
\end{remark}

The Hasse-Weil bound states that a (smooth, projective, connected) curve $X$ of genus $g$ defined over $\mathbb{F}_q$ must satisfy 
\[
q+1-2g\sqrt{q}\leq\#X(\mathbb{F}_q)\leq q+1+2g\sqrt{q}.
\]
A curve that meets the upper bound is called an $\mathbb{F}_q$-maximal curve.  

It is easy to check that the number of $\F_{q^2}$-points on the Suzuki curve 
is $\#S_m\left(\F_{q^2}\right)=q^2+1$ and so $S_m$ is not maximal over $\F_{q^2}$.
%and the $L$-polynomial is $(1+\sqrt{2q} t +q t^2)^g$ \cite[Proposition 4.3]{H92}.
Analyzing powers of the eigenvalues of Frobenius shows the following.  

\begin{lemma}
The Suzuki curve $S_m$ is $\F_{q^4}$-maximal.
\end{lemma}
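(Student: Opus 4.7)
The plan is to use the $L$-polynomial of $S_m$ recalled in the introduction, namely $L(t)=(1+\sqrt{2q}\,t+qt^2)^g$, to compute $\#S_m(\F_{q^4})$ directly from the usual point-count formula and check that it meets the Hasse-Weil upper bound.

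First, I would factor the quadratic: write $1+\sqrt{2q}\,t+qt^2=(1-\alpha t)(1-\beta t)$, so that $\alpha+\beta=-\sqrt{2q}$ and $\alpha\beta=q$. By the Weil conjectures $|\alpha|=|\beta|=\sqrt{q}$, and the full multiset of reciprocal Frobenius eigenvalues on $H^1(S_m,\mathbb{Q}_\ell)$ consists of $\alpha$ and $\beta$, each with multiplicity $g$. The standard formula then reads
\[
\#S_m(\F_{q^n})=q^n+1-g(\alpha^n+\beta^n).
\]

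Next, I would compute $\alpha^4+\beta^4$ via Newton's identities. Squaring $\alpha+\beta=-\sqrt{2q}$ and using $\alpha\beta=q$ gives $\alpha^2+\beta^2=(\alpha+\beta)^2-2\alpha\beta=2q-2q=0$. Squaring again, $\alpha^4+\beta^4=(\alpha^2+\beta^2)^2-2(\alpha\beta)^2=-2q^2$. (Equivalently, $\alpha$ and $\beta$ are $\sqrt{q}$ times primitive $8$th roots of unity with arguments $\pm 3\pi/4$, so their fourth powers both equal $-q^2$.) Substituting $n=4$ yields
\[
\#S_m(\F_{q^4})=q^4+1+2gq^2,
\]
which is exactly the Hasse-Weil upper bound $q^4+1+2g\sqrt{q^4}$. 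Hence $S_m$ is $\F_{q^4}$-maximal.

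There is no real obstacle here: the whole argument is a short manipulation of symmetric functions of $\alpha,\beta$. The only point worth highlighting is the vanishing $\alpha^2+\beta^2=0$, which is built into the specific shape of the $L$-polynomial and is what forces maximality to appear at the fourth extension rather than the second. As a sanity check, the same formula applied with $n=1$ recovers $\#S_m(\F_q)=q+1+g\sqrt{2q}=q+1+q_0(q-1)\cdot 2q_0=q^2+1$, matching the classical count quoted in the introduction, and with $n=2$ it gives $\#S_m(\F_{q^2})=q^2+1$ (since $\alpha^2+\beta^2=0$), confirming the earlier observation that $S_m$ is not $\F_{q^2}$-maximal.
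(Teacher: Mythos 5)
Your proof is correct and follows essentially the same route as the paper: both extract the reciprocal Frobenius eigenvalues from the $L$-polynomial $(1+\sqrt{2q}\,t+qt^2)^g$, compute $\alpha^4+\overline{\alpha}^4=-2q^2$ (the paper by writing $\alpha=-q_0(1+i)$ explicitly, you via Newton's identities), and conclude $\#S_m(\F_{q^4})=q^4+1+2gq^2$ meets the Hasse--Weil bound. Your derivation is if anything cleaner than the paper's, which contains a stray factor of $(-q_0)^4$ in its point-count formula.
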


\begin{proof}
The $L$-polynomial of $S_m$ is $L(S_m, t)=(1+\sqrt{2q} t +q t^2)^g$ \cite[Proposition 4.3]{H92}.
This factors as $L(S_m,t)=(1 - \alpha t)^g (1 - \overline{\alpha} t)^g$ where 
$\alpha = −q_0 (1 + i)$. 
That implies that 
$\#S_m (\F_{q^4} ) = q^4 + 1 - (-q_0)^4 (\alpha^4 + \overline{\alpha}^4)g = q^4 + 1 + 2q^2g$
which shows that $S_m$ is $\F_{q^4}$-maximal.
\end{proof}

A curve which is maximal over a finite field is supersingular, in that the slopes of the Newton
polygon of its $L$-polynomial all equal $1/2$.
Thus $S_m$ is supersingular.
The supersingularity condition is equivalent to the condition that $\Jac(S_m)$ is isogenous to a product of supersingular elliptic curves.  
A supersingular curve in characteristic $2$ has $2$-rank $0$.
This implies, a priori, that the $a$-number of $S_m$ is at least one.

\subsection{Regular $1$-forms}

To compute a basis for the vector space $H^0(S_m, \Omega^1)$ of regular $1$-forms on $S_m$, 
consider the functions $h_1,h_2\in\F\left(S_m\right)$ given by: 
\begin{align*}
h_1:&=z^{2q_0} + y^{2q_0+1}, \\
h_2:&=z^{2q_0}y+h_1^{2q_0}.
\end{align*}

For any $f\in\F(S_m)$, let $v_\infty(f)$ denote the valuation of $f$ at $P_\infty$.

\begin{lemma} \label{poleorder}
The functions $y,z,h_1,h_2 \in \F(S_m)$ have no poles except at $P_\infty$ where 

\begin{align*}
v_y:&=-v_\infty(y) =q,  & v_z:&=-v_\infty(z)=q+q_0, \\
v_{h_1}:&=-v_\infty(h_1) =q+2q_0,  & v_{h_2}:&=-v_\infty(h_2)=q+2q_0+1. \\
\end{align*}

The function $\pi = h_1/h_2$ is a uniformizer at $P_\infty$.
\end{lemma}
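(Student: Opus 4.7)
The plan is to first show that all four functions have their only pole at $P_\infty$, then pin down the exact pole orders at $P_\infty$, and finally read off the order of $\pi$ as a difference of valuations. Since $y=Y/W$ and $z=Z/W$, both are regular on the affine chart $W\neq 0$, and $P_\infty$ is the unique point of $S_m$ with $W=0$; hence $y$ and $z$ have poles only at $P_\infty$, and $h_1,h_2$, being polynomials in $y,z$, inherit the same property.

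Next I would compute $v_\infty(y)$ and $v_\infty(z)$. The function $y$ realises $S_m$ as a cover of $\mathbb{P}^1_y$ of degree $q$ (the equation \eqref{Suzuki} exhibits $z$ as a root of a degree-$q$ polynomial over $\F_q(y)$). Because $P_\infty$ is the unique point of $S_m$ lying above $\infty\in\mathbb{P}^1_y$, this cover is totally ramified there, so $v_\infty(y)=-q$. Feeding this into \eqref{Suzuki}, the right-hand side has valuation $-q(q+q_0)$, and since on the left the term $z^q$ dominates $z$, one finds $q\,v_\infty(z)=-q(q+q_0)$, whence $v_\infty(z)=-(q+q_0)$.

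The main obstacle lies in finding the pole orders of $h_1$ and $h_2$: the two summands defining $h_1$ both have valuation $-(2q_0+1)q$ at $P_\infty$, and similarly the two summands of $h_2$ have the same valuation, so one must prove that the leading terms cancel and locate the next term. The key trick, available thanks to characteristic $2$, is to apply the Frobenius $x\mapsto x^{q_0}$ and use \eqref{Suzuki} in the form $z^q=z+y^{q+q_0}+y^{q_0+1}$. A direct computation yields the two identities
\[
h_1^{q_0}=z+y^{q_0+1},\qquad h_2^{q_0}=z y^{q_0}+h_1,
\]
where the first comes from $h_1^{q_0}=z^q+y^{q+q_0}$ and the second from expanding $h_2^{q_0}=z^q y^{q_0}+h_1^q$ and simplifying with the first identity. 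Taking $v_\infty$ of both sides and using $v_\infty(y^{q_0+1})=-q(q_0+1)<v_\infty(z)$ and $v_\infty(zy^{q_0})=-(q+q_0+qq_0)<v_\infty(h_1)$, I would conclude $q_0 v_\infty(h_1)=-q(q_0+1)$ and $q_0 v_\infty(h_2)=-(q+q_0+qq_0)$, and the arithmetic identity $q/q_0=2q_0$ converts these to $v_\infty(h_1)=-(q+2q_0)$ and $v_\infty(h_2)=-(q+2q_0+1)$.

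Finally, $v_\infty(\pi)=v_\infty(h_1)-v_\infty(h_2)=1$, so $\pi$ is a uniformizer at $P_\infty$. The only delicate bookkeeping is verifying the two Frobenius identities by expanding and substituting \eqref{Suzuki}; everything else reduces to comparing integer valuations and invoking that $2q_0^2=q$.
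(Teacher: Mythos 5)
Your proof is correct. Note that the paper does not actually prove this lemma itself --- it simply cites \cite[Proposition 1.3]{HansenStich} --- so any self-contained argument is "different" from the paper's by default. Your route is a natural one and is fully consistent with the paper's own toolbox: the two Frobenius identities you derive, $h_1^{q_0}=z+y^{q_0+1}$ and $h_2^{q_0}=h_1+zy^{q_0}$, are exactly two of the relations the paper records later as equation~(\ref{reduction}) and verifies "by direct substitution." Each step checks out: $y$ has its only pole at $P_\infty$ with $v_\infty(y)=-q$ by total ramification of the degree-$q$ cover $S_m\to\mathbb{P}^1_y$ over the unique point at infinity; $v_\infty(z)=-(q+q_0)$ follows from the ultrametric comparison in the curve equation; the $q_0$-th-power trick correctly sidesteps the cancellation of the leading terms of $h_1$ and $h_2$ (both summands of $h_1$ do have valuation $-(2q_0+1)q$, and both summands of $h_2$ have valuation $-(2q_0q+2q)$, so this step is genuinely needed); and the dominance comparisons $q(q_0+1)>q+q_0$ and $q+q_0+qq_0>q+2q_0$ are both valid, yielding $v_\infty(h_1)=-(q+2q_0)$ and $v_\infty(h_2)=-(q+2q_0+1)$ via $q/q_0=2q_0$. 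The conclusion $v_\infty(h_1/h_2)=1$ then gives the uniformizer claim. The one hypothesis you lean on implicitly is that the smooth model of $S_m$ has a single point over $y=\infty$; this is stated in the paper (quoting \cite[Proposition 1.1]{HansenStich}), so it is fair to use, but it is worth flagging that the plane model is singular at infinity and this uniqueness is a fact about the normalization, not something visible from the homogeneous equation alone.
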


\begin{proof} See \cite[Proposition 1.3]{HansenStich}.
\end{proof}

The function $y$ is a separating variable so $dy$ is a basis of the $1$-dimensional vector space of differential $1$-forms.  
The next lemma shows that $dy$ is regular.

\begin{lemma} \label{dy}
The differential $1$-form $dy$ satisfies 
\[
v_\infty(dy) = 2g-2 \quad \mbox{and} \quad v_P(dy) = 0 
\]
for all points $P \in S_m(\overline{\F}_q)$.
\end{lemma}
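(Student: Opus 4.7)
The plan is to check that $dy$ has neither a zero nor a pole at any affine point of $S_m$, and then read off the order at $P_\infty$ from the degree of the canonical divisor, bypassing any direct analysis of the wild ramification at infinity.

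First I would fix an arbitrary affine point $P=(y_0,z_0)\in S_m(\overline{\F}_q)$ and analyze the projection $y:S_m\to\mathbb{P}^1_y$ at $P$. Setting $c=y_0^{q_0}(y_0^q+y_0)\in\overline{\F}_q$, the fibre of $y$ over $y_0$ is cut out in $\overline{\F}_q[z]$ by $z^q+z-c$. Since $q=2^{2m+1}$ vanishes in characteristic $2$, the formal $z$-derivative of this polynomial is the constant $1$, so $z^q+z-c$ is separable. Hence the fibre over $y_0$ consists of $q$ distinct reduced points, and $y$ is étale (in particular unramified) at $P$. Therefore $y-y_0$ is a uniformizer in $\mathcal{O}_{S_m,P}$, giving $v_P(dy)=v_P\bigl(d(y-y_0)\bigr)=0$.

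Next, since $y$ is a separating variable for $\F_q(S_m)/\F_q$, $dy\neq 0$ in $\Omega^1_{\F_q(S_m)/\F_q}$, so $(dy)$ is a canonical divisor and has total degree $2g-2$. Combining this with the vanishing $v_P(dy)=0$ at every point $P\neq P_\infty$ (using Lemma~\ref{poleorder} to see that the only candidate for a pole or zero is $P_\infty$, where $y$ has a pole), the entire divisor $(dy)$ must be concentrated at $P_\infty$, forcing $v_\infty(dy)=2g-2$.

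The only potentially delicate step is the étaleness at affine points, since $y:S_m\to\mathbb{P}^1_y$ is wildly ramified at $P_\infty$ and one might expect ramification to show up elsewhere as well. The characteristic-$2$ accident that annihilates $qz^{q-1}$ reduces the question to a one-line separability check and avoids computing the different of the Artin--Schreier-type extension $\F_q(y)\subset\F_q(S_m)$ at $P_\infty$, which would otherwise be the real obstacle.
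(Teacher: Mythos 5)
Your proof is correct, and it diverges from the paper's in a genuinely useful way on the point at infinity. For the affine points the two arguments coincide: both observe that $z^q+z-c$ has constant derivative $1$ in characteristic $2$, hence is separable, so the fibre of $y$ over $y_0$ consists of $q$ distinct points, the degree-$q$ cover $S_m\to\mathbb{P}^1_y$ is unramified there, $y-y_0$ is a uniformizer, and $v_P(dy)=0$. Where you differ is at $P_\infty$: the paper computes $v_\infty(dy)$ directly by writing $dy$ in terms of the uniformizer $\pi=h_1/h_2$ from Lemma~\ref{poleorder} and tracking pole orders of $h_1,h_2,y,z$, arriving at $4q_0^3-2q_0-2=2g-2$ by explicit calculation; you instead note that $dy\neq 0$ (since $y$ is separating), so $(dy)$ is a canonical divisor of degree $2g-2$, and since every affine valuation vanishes and $P_\infty$ is the unique remaining point, the whole degree must sit there. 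Your route is shorter and avoids the wild-ramification computation entirely, at the cost of importing the genus $g=q_0(q-1)$ from Lemma~\ref{Lgenus}; the paper's computation, by contrast, is independent of the genus formula and so doubles as a consistency check on it (and exercises the functions $h_1,h_2$ that are needed again in Proposition~\ref{basis}). One small remark: your parenthetical appeal to Lemma~\ref{poleorder} to locate the ``only candidate'' for a zero or pole is unnecessary --- the affine analysis already handles every point other than $P_\infty$, and $P_\infty$ is the only non-affine point --- but it does no harm.
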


\begin{proof}
Recall that $\pi$ is a uniformizer at $P_\infty$. To take the valuation of $dy$ at $P_\infty$, 
we first rewrite $dy=f(x,y) d\pi$ for some $f(y,z) \in \F_q(y,z)$.  Note that
\begin{align*}
d\pi &= d\left(\frac{h_1}{h_2}\right)
= \frac{h_2\,dh_1 - h_1 \,dh_2}{h_2^2} 
= \frac{h_2\,y^{2q_0} - h_1 \,z^{2q_0}}{h_2^2} dy.
\end{align*}
%as $dh_1= y^{2q_0}dy$ and $dh_2 = z^{2q_0}dy$.  Examg the numerator and denominator seperately we find that
Since $v_\infty(h_2^2) = -2(q+2q_0+1)$ and
\begin{align*}
v_\infty(h_2\,y^{2q_0} - v_{h_1} \,z^{2q_0}) &= \min \{ -2q_0v_y - v_{h_2}, -2q_0v_z - v_{h_1}\}\\
&= -2q_0v_z - v_{h_1} \\
&= -4q_0^3 - 2q_0,
\end{align*}
we see that
\begin{align*}
v_\infty(dy) &= v_\infty \left(\frac{h_2^2}{h_2^{2q_0} - h_1 z^{2q_0}} d\pi \right) \\
&= -2q + 2q_0 -2 - \left( -4q_0^3 - 2q_0 \right)  \\
&= 4q_o^3 -2q_0 -2 \\
&= 2g-2.
\end{align*}
We next show that $dy$ has no zero or pole at any affine point of $S_m$.
Note that, for any $a \in \overline{\F}_q$, the polynomial $z^q + z + a$ splits into distinct factors in $\overline{\F}_q(z)$, so there are exactly $q$ points of $S_m(\overline{\F}_q)$ 
lying over any $y_0 \in \mathbb{A}^1_y(\overline{\F}_q)$. Since
\[
\left[\overline{\F}_q(y,z):\overline{\F}_q(y)\right]=q,
\]
the $\overline{\F}_q$-Galois cover $S_m \to {\mathbb P}^1_y$ is unramified 
at all affine points of $S_m$. Consequently, for any point $P\in S_m(\overline{\F}_q)$ lying over $a\in\mathbb{A}^1_y(\overline{\F}_q)$, we see that $v_P(y-a) = 1$. 
Thus, $y-a$ is a uniformizer at $P$ and $v_P(dy)=0$, proving the proposition. 
\end{proof}

By Lemma~\ref{dy}, finding a basis for $H^0(S_m, \Omega^1)$ is equivalent to finding a basis for $L((dy))$, since $(dy) = (2g-2) P_\infty$ is the canonical divisor.  
To do this, we make use of the relations:
\begin{equation} \label{reduction}
z^2 = yh_1 +h_2, \quad h_1^{q_0} = z + y^{q_0+1}, \quad h_2^{q_0} = h_1 + zy^{q_0},
\end{equation}
which can be verified by direct substitution, and the following proposition.

\begin{proposition} \cite[Proposition 1.5]{HansenStich} \label{semithm}
Let $SG$ be the semigroup $\left\langle q,q+q_0,q+2q_0,q+2q_0+1 \right\rangle$.
Then $\#\left\{n\in SG\mid 0\leq n\leq 2g-2\right\}=g$.
\end{proposition}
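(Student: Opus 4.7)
The plan is to count $|SG \cap [0, 2g-2]|$ directly via a normal form for elements of $SG$.

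First, I would rewrite a generic non-negative integer combination of the four generators as
\[
 aq + b(q+q_0) + c(q+2q_0) + d(q+2q_0+1) = nq + kq_0 + d,
\]
with $n = a + b + c + d$ and $k = b + 2c + 2d$; the constraints $a,b,c,d \geq 0$ translate into $n \geq 0$, $d \geq 0$, and $2d \leq k \leq 2n$. The identity $q_0(q+2q_0+1) = q_0 q + (q+q_0)$ lets one restrict to $0 \leq d \leq q_0 - 1$, and the identity $2q_0 \cdot q_0 = q$ lets one absorb carries so that every non-negative integer $N$ has a unique decomposition $N = Aq + Bq_0 + C$ with $0 \leq B \leq 2q_0 - 1$ and $0 \leq C \leq q_0 - 1$.

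Second, for this unique decomposition of $N$, I would determine when $N \in SG$. Every representation of $N$ as $nq + kq_0 + C$ arises from $(n, k) = (A - t, B + 2q_0 t)$ for some integer $t$ with $0 \leq t \leq A$, so $N \in SG$ iff some such $t$ additionally satisfies $B + 2q_0 t \geq 2C$ and $B + 2q_0 t \leq 2(A - t)$. Since $0 < 2C - B < 2q_0$ whenever $B < 2C$, a short case analysis yields the criterion: $N \in SG$ if and only if either (a) $B \geq 2C$ and $B \leq 2A$, or (b) $B < 2C$ and $2A - B \geq 2(q_0 + 1)$.

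Third, I would count triples $(A, B, C)$ satisfying this criterion together with $N \leq 2g - 2 = 2q_0 q - 2q_0 - 2$. The latter bound forces $A \leq 2q_0 - 1$, with the additional constraint $Bq_0 + C \leq 2q_0^2 - 2q_0 - 2$ at the boundary $A = 2q_0 - 1$. Splitting by the parity of $B$ and by the two alternatives in the criterion, each subcase reduces to a standard polynomial sum in $q_0$. Combining all contributions should yield exactly $q_0(q-1) = g$.

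The main obstacle will be the bookkeeping at $A = 2q_0 - 1$, where the bound $N \leq 2g - 2$ eliminates a few $(B, C)$ pairs and lowers the maximum allowed $A$ by one for others. Once these boundary adjustments are tracked, the algebraic simplification to $g = 2q_0^3 - q_0$ is routine.
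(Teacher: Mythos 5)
Your argument is correct, and I checked the two places where it could plausibly go wrong. The membership criterion is right: after reducing to $d=C\in[0,q_0-1]$ (the identity $q_0(q+2q_0+1)=q_0q+(q+q_0)$ does preserve the admissibility constraints $2d\le k\le 2n$), every representation is indexed by $t\in[0,A]$ as you say, and the case analysis gives exactly ``$B\ge 2C$ and $B\le 2A$'' or ``$B<2C$ and $2A-B\ge 2(q_0+1)$.'' The final count also works out: summing over $A\le 2q_0-1$ without the boundary constraint gives $\sum_{j=1}^{q_0}j^2+q_0(q_0^2+q_0)+\sum_{s=1}^{q_0-1}(-s^2+2q_0s-q_0)=2q_0^3+q_0$, and at $A=2q_0-1$ the condition $q_0B+C\le 2q_0^2-2q_0-2$ removes exactly the $2q_0$ pairs with $B\in\{2q_0-2,\,2q_0-1\}$, leaving $2q_0^3-q_0=g$. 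This is, however, a genuinely different route from the paper, which gives no proof at all and simply cites Hansen--Stichtenoth; the argument there identifies $\langle q,q+q_0,q+2q_0,q+2q_0+1\rangle$ as the Weierstrass semigroup of $P_\infty$ (these are the pole numbers of $y,z,h_1,h_2$, cf.\ Lemma~\ref{poleorder}) and invokes the standard gap count: a genus-$g$ curve has exactly $g$ gaps at any point, all lying in $[1,2g-1]$, so once one knows the semigroup is symmetric the count of nongaps in $[0,2g-2]$ is $g$. That route is shorter but leans on Riemann--Roch and on knowing the semigroup is the full Weierstrass semigroup; yours is self-contained, purely arithmetic, and as a byproduct produces an explicit normal-form membership test for $SG$ (which could be reused, e.g., in the lattice-point count of Theorem~\ref{maintheorem}). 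The cost is the boundary bookkeeping at $A=2q_0-1$, which you correctly identified as the delicate step and which does work out.
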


We now have all the required information to find a basis of $H^0(S_m, \Omega^1)$.

\begin{proposition} \label{basis}
The following set is a basis of $H^0(S_m, \Omega^1)$:
\[
\mathcal{B}:=\left\{y^a z^b h_1^c h_2^d \, dy \, \mid \, (a,b,c,d) \in \mathcal{E} \right\}
\]
where $\mathcal{E}$ is the set of $(a,b,c,d) \subset \mathbb{Z}^4$ satisfying
\[\begin{array}{c}
0 \leq b \leq 1,\quad 0 \leq c \leq q_0-1,\quad 0 \leq d \leq q_0-1,\\
 av_y+bv_z+cv_{h_1}+dv_{h_2} \leq 2g-2. 
\end{array}\]
\end{proposition}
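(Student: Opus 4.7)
The plan is to prove that $\mathcal{B}$ is a linearly independent subset of $H^0(S_m,\Omega^1)$ whose cardinality equals $g=\dim H^0(S_m,\Omega^1)$. I would organize the argument around the Weierstrass semigroup $SG$ at $P_\infty$, proving that the map $(a,b,c,d)\mapsto N(a,b,c,d):=av_y+bv_z+cv_{h_1}+dv_{h_2}$ is a bijection $\mathcal{E}\to SG\cap[0,2g-2]$.

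For regularity, Lemma~\ref{poleorder} implies that $y^az^bh_1^ch_2^d$ has a pole only at $P_\infty$ of order exactly $N(a,b,c,d)$, while Lemma~\ref{dy} gives $v_\infty(dy)=2g-2$ and $v_P(dy)=0$ at every affine $P$. The bound $N(a,b,c,d)\le 2g-2$ appearing in the definition of $\mathcal{E}$ therefore places each basis element in $H^0(S_m,\Omega^1)$. For linear independence, I would show $(a,b,c,d)\mapsto N(a,b,c,d)$ is injective on $\mathcal{E}$: reducing $N$ modulo $q_0$ isolates $d$ (only $v_{h_2}\equiv 1$ is nonzero mod $q_0$), then modulo $2q_0$ isolates $b$, then modulo $q$ isolates $c$, and $a$ is determined last. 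The bounds on $b,c,d$ in $\mathcal{E}$ pin down the tuple uniquely, so distinct tuples give distinct pole orders at $P_\infty$ and hence linearly independent 1-forms.

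By Proposition~\ref{semithm}, $|SG\cap[0,2g-2]|=g$, so it remains to prove that the injection $\mathcal{E}\hookrightarrow SG\cap[0,2g-2]$ is surjective; this is where I expect the main difficulty. Given $n\in SG\cap[0,2g-2]$, I would start with any decomposition $n=Av_y+Bv_z+Cv_{h_1}+Dv_{h_2}$ having non-negative exponents and reduce it using the semigroup identities that follow from the relations (\ref{reduction}): namely $q_0v_{h_2}=q_0v_y+v_z$ (from $h_2^{q_0}=h_1+zy^{q_0}$), $2v_z=v_y+v_{h_1}$ (from $z^2=yh_1+h_2$), and $q_0v_{h_1}=(q_0+1)v_y$ (from $h_1^{q_0}=z+y^{q_0+1}$). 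Applying these in this order — first reducing $D$ mod $q_0$, then $B$ mod $2$, then $C$ mod $q_0$ — preserves $N$ while only increasing $A$, and later steps do not disturb the exponents already reduced, so the process terminates at a tuple in $\mathcal{E}$. This establishes the bijection and gives $|\mathcal{B}|=|\mathcal{E}|=g=\dim H^0(S_m,\Omega^1)$, completing the proof.
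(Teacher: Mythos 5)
Your proposal is correct and follows essentially the same route as the paper: linear independence via the same sequence of congruences (mod $q_0$, then $2q_0$, then $q$) showing distinct pole orders at $P_\infty$, and spanning by identifying the pole orders with $SG\cap[0,2g-2]$ (of size $g$ by Proposition~\ref{semithm}) using the relations~(\ref{reduction}) to reduce exponents into the required ranges. Your explicit reduction algorithm with the semigroup identities $q_0v_{h_2}=q_0v_y+v_z$, $2v_z=v_y+v_{h_1}$, $q_0v_{h_1}=(q_0+1)v_y$ is just a more carefully spelled-out version of the paper's ``rewrite using the relations without changing the valuation'' step.
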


%\substack{av_y+bv_z+cv_{h_1}+dv_{h_2} \leq 2g-2,\\
%0 \leq b \leq 1,\,\,0 \leq c \leq q_0-1,\,\,0 \leq d \leq q_0-1}

\begin{proof}
To prove linear independence, it suffices to prove that all 
elements in our basis have distinct valuations at $P_\infty$.  
Suppose that $y^a z^b h_1^c h_2^d \, dy \in \mathcal{B}$ and $y^{a'} z^{b'} h_1^{c'} h_2^{d'} \, dy\in\mathcal{B}$ have the same valuation at $P_\infty$; we will show they are equal. Comparing their valuations at $P_\infty$, we must have that
\begin{equation}\label{poleeq}
(a-a')v_y + (b-b')v_z + (c-c')v_{h_1} +  (d-d')v_{h_2} =0
\end{equation}

Now consider equation~(\ref{poleeq}) modulo $q_0$.  As $q_0$ divides $v_y,v_z$ and $v_{h_1}$, 
\[
 (d-d') \equiv 0 \mod q_0.
\]
As $0 \leq d,d' < q_0$, it must be the case that $d = d'$.  
Substituting $d-d'=0$ into equation~(\ref{poleeq}) and reducing modulo $2q_0$ yields that 
\[
(b-b')q_0  \equiv 0 \mod 2q_0.
\]
However, as $0 \leq b,b' \leq 1$, it must also be the case that $b = b'$.  Simplifying~(\ref{poleeq}) and reducing modulo $q = 2q_o^2$ yields that
\[
(c-c')(q-2q_0) = (c-c')2q_0 \equiv 0 \mod 2q_0^2 .
\]
Since $0 \leq c,c' \leq q_0-1$, we find that $c = c'$; so $a= a'$ as well. 

We claim that the above set also spans $L\left((dy)\right)$.  Clearly the valuations at $P_{\infty}$ of 
\begin{align*}
\left\{y^az^bh_1^ch_2^c\mid(a-a')v_y + (b-b')v_z + (c-c')v_{h_1} +  (d-d')v_{h_2} \leq 2g-2 \right\}
\end{align*}
are equal to $\left\{n\in SG\mid 0\leq n\leq 2g-2\right\}$, which is a set of size $g$ by Proposition~\ref{semithm}. Rewriting elements of the above set in terms of our basis will not change their valuation at $P_\infty$. Thus we can use the relations in equation~(\ref{reduction}) to see that $\mathcal{B}$ also contains an element for each of the $g$ possible valuations at $P_{\infty}$. By the previous paragraphs, each valuation occurs exactly once. By Riemann-Roch, $\ell\left((dy)\right)= g$, so $\mathcal{B}$ is a basis. 
\end{proof}

%%%%%%%%%%%%%%%%%%%%%%%%%%%%%%%%%%%%%%%%%%%%
\subsection{Action of the Cartier operator}\label{Holley}
%%%%%%%%%%%%%%%%%%%%%%%%%%%%%%%%%%%%%%%%%%%%

In characteristic $2$, the Cartier operator $\car$ acts on differential $1$-forms according to the following properties: 
(see e.g., \cite[Section 2.2.5]{TVN}). 

\begin{enumerate}
\item $\car$ is $1/2$-linear; i.e., $\car$ is additive and $\car{(f^2\omega)}=f\car{(\omega)}$.

\item$\car{(y^j\,dy)}=
\begin{cases}
0, & \textrm{if }j\not\equiv 1 \mod 2\\
y^{e-1}\,dy&\textrm{if }j=2e-1.
\end{cases}$
\item $\car{(\omega)}=0$ if and only if $\omega$ is exact; i.e., if and only if $\omega=df$ for some $f \in \F_q\left(S_m\right)$. 
\item $\car{(\omega)}=\omega$ if and only if $\omega=df/f$ for some $f \in\F_q\left(S_m\right)$.
\end{enumerate}

Any 1-form $\omega \in H^0(S_m, \Omega^1)$ can be written in the form $\omega=(f^2+g^2y) dy$, as $\ch(\F_q)=2$. 
Then 
\begin{equation} \label{Ecartier}
\car{((f^2+g^2y) dy)}=g \, dy.
\end{equation}
By these properties, it is clear that
\begin{equation}\label{eq:evenc}
\car(y^{2e_1+r_1}z^{2e_2+r_2}h_1^{2e_3+r_3}h_2^{2e_4+r_4}\,dy)=y^{e_1}z^{e_2}h_1^{e_3}h_2^{e_4}\car(y^{r_1}z^{r_2}h_1^{r_3}h_2^{r_4} \,dy).
\end{equation}
 
Hence to compute the action of $\car$ on $H^0(S_m, \Omega^1)$, we need only compute $\car$ on the 16 monomials in $y,z,h_1,h_2$ of degree less than or equal to one in each variable. The table below shows this action, 
where each $\car(f\, dy)$ is written in terms of the original basis using the curve equation~(\ref{Suzuki}).

\[\displaystyle \begin{array}{|l|l|}
\hline
f&\car{(f dy)}\\
\hline
\hline
1&0\\ 
\hline
y&dy\\
\hline
z&y^{q_0/2}\, dy\\
\hline
h_1&y^{q_0}\, dy\\
\hline
h_2&\left((yh_1)^{q_0/2}+h_2\right)\, dy\\
\hline
yz&h_1^{q_0/2}\,dy\\
\hline
yh_1&\left((yh_1)^{q_0/2}+h_2\right)\, dy\\
\hline
zh_1&(yh_2)^{q_0/2}\, dy\\
\hline
zh_2&(h_1h_2)^{q_0/2}\, dy\\
\hline
h_1h_2&\left(h_1+zy^{q_0}\right)\, dy\\
\hline
yzh_1&\left(y^{q_0/2}z+(h_1h_2)^{q_0/2}\right)\, dy\\
\hline
yzh_2&\left(zh_1^{q_0/2}+y^{q_0/2+1}h_2^{q_0/2}\right)\, dy\\
\hline
zh_1h_2&\left(zy^{q_0/2}h_2^{q_0/2}+h_1^{q_0/2+1}\right)\, dy\\
\hline
yh_1h_2&\left((yh_1)^{q_0/2}z+h_2^{q_0/2}z\right)\, dy\\
\hline
yzh_1h_2&\left(y^{q_0/2}h_2+zh_1^{q_0/2}h_2^{q_02}\right)\, dy\\
\hline
\end{array}\]

%\begin{align*}
%h_1^{q_0}&=z+y^{q_0+1}\\
%h_2^{q_0}&=h_1+zy^{q_0}
%\end{align*}

\begin{example}
We illustrate the computation for $zh_1h_2\, dy$.
Direct computation yields
\begin{align*}
\car{(zh_1h_2\, dy)}&=\car{\left(zh_1\left(yz^{2q_0}+h_1^{2q_0}\right)\, dy\right)}\\
&=z^{q_0}\car{\left(zyh_1\, dy\right)}+h_1^{q_0}\car{\left(zh_1\, dy\right)}\\
&=\left(y^{q_0/2}z^{q_0+1}+h_1^{q_0/2}h_2^{q_0/2}z^{q_0}+h^{q_0/2}h_1^{q_0/2}h_2^{q_0/2}\right)\, dy.
\end{align*}
To write this expression in terms of the original basis, we identify the monomials with the highest pole order at infinity. Since
\[
v_\infty(h_1^{q_0/2}h_2^{q_0/2}z^{q_0})=v_\infty(h^{q_0/2}h_1^{q_0/2}h_2^{q_0/2})=-4q_0^3-3q_0^2-q_0/2<-\left(2g-2\right),
\]
these two terms may be simplified. Using Section~\ref{Colin},
\begin{align*}
h_1^{q_0/2}h_2^{q_0/2}z^{q_0}+&h^{q_0/2}h_1^{q_0/2}h_2^{q_0/2}\\
&=h_1^{q_0/2}h_2^{q_0/2}\left(y^{q_0/2}h_1^{q_0/2}+h_2^{q_0/2}\right)+y^{q_0/2}h_1^{q_0}h_2^{q_0/2}
=h_1^{q_0/2}h_2^{q_0}.
\end{align*}
The final expression follows by rewriting $z^{q_0+1}$ and $h_2^{q_0}$ in terms of lower order basis elements using equations~(\ref{reduction}).
\end{example}

%One also should notice that except in the case of $\car(zh_1h_2\, dy$, $\car{f \, dy}$ involves at worst exponents of $q_0/2$. 
%By definition, a basis element $y^az^bh_1^ch_2^d\, dy$ of $H^1(\Omega^1, S_m)$ has $b=0,1$ and $c,d<q_0$. %It follows that in the expression $y^{2a}z^{2b}h_1^{2c}h_2^{2d} f\, dy$, $c,d<q_0/2$ and $2b=0$. 

\begin{remark}
To compute $\car{(\omega)}$ for a general element $\omega\in\mathcal{B}$, simply apply equation~(\ref{eq:evenc}) and use the table above; in nearly all cases the direct result will again be in terms of the basis $\mathcal{B}$. The only exception is when $\omega=zh_1^{q_0-1}h_2\,dy$. In this case we have:
\begin{align*}
\car{\left(h_1^{q_0-2}\cdot zh_1h_2\, dy\right)}&=h_1^{q_0/2-1}\car{\left(zh_1h_2\right)}\\
&=\left(zy^{q_0/2}h_1^{q_0/2-1}h_2^{q_0/2}+h_1^{q_0}\right)\, dy.
\end{align*}
Using equations~(\ref{reduction}), one can obtain an expression in terms of the original basis. 
\end{remark}

%%%%%%%%%%%%%%%%%%%%%%%%%%%%%%%%%%%%%%%%%%%%
\section{The $a$-number for Suzuki curves}\label{Derek}
%%%%%%%%%%%%%%%%%%%%%%%%%%%%%%%%%%%%%%%%%%%%

\subsection{A closed form formula for the $a$-number} 
We now have the tools to compute $a(m)$. The calculation amounts to counting lattice points in polytopes in $\mathbb{R}^3$, which is a hard problem in general. In our case, however, the values $v_y$, $v_{h_1}$, and $v_{h_2}$ are so similar that the polytopes in question are nearly regular; this makes our counting problem much easier.

\begin{theorem} \label{maintheorem}
Let $a(m)$ and $g(m)$ be the $a$-number and genus of $S_m$ respectively. Then
\[
a(m)=\frac{q_0(q_0+1)(2q_0+1)}{6}.
\]
In particular,
\[
\frac{1}{6}<\frac{a(m)}{g(m)}<\frac{1}{6}+\frac{1}{2^{m+1}}.
\]
\end{theorem}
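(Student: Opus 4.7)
My plan is to sidestep the parity-class analysis suggested by the Cartier table and use a cleaner description of $\ker\car$ available in characteristic $2$. Since $y$ is a separating variable, the decomposition $\overline{\F}_q(S_m)=\overline{\F}_q(S_m)^2\oplus\overline{\F}_q(S_m)^2\cdot y$ holds, so every $\omega\in H^0(S_m,\Omega^1)$ admits a unique form $\omega=(f^2+g^2y)\,dy$; equation~(\ref{Ecartier}) then gives $\car(\omega)=g\,dy$. Hence $\omega\in\ker\car$ if and only if $g=0$, i.e., $\omega=f^2\,dy$ for some $f\in\overline{\F}_q(S_m)$.

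Next I would use Lemma~\ref{dy} to translate regularity: since $v_{P_\infty}(dy)=2g-2$ and $v_P(dy)=0$ at each affine $P$, one has $f^2\,dy\in H^0(S_m,\Omega^1)$ iff $f\in L((g-1)P_\infty)$. An $\overline{\F}_q$-basis $\{f_i\}$ of $L((g-1)P_\infty)$ then gives a basis $\{f_i^2\,dy\}$ of $\ker\car$: spanning holds because $\left(\sum c_if_i\right)^2=\sum c_i^2f_i^2$ in characteristic $2$, and linear independence follows from $\sum d_if_i^2=\left(\sum d_i^{1/2}f_i\right)^2=0$ forcing every $d_i=0$. Combined with the standard fact $\ell((g-1)P_\infty)=\#\{n\in SG : 0\le n\le g-1\}$, where $SG=\langle q,\,q+q_0,\,q+2q_0,\,q+2q_0+1\rangle$ is the Weierstrass semigroup at $P_\infty$, this reduces the problem to counting semigroup elements.

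For the count, I would parametrize $n=aq+b(q+q_0)+c(q+2q_0)+d(q+2q_0+1)\in SG$ by the invariants $s=a+b+c+d$, $t=b+2c+2d$, $u=d$, so $n=sq+tq_0+u$. The constraint $n\le g-1<q_0q$ forces $s\le q_0-1$. For each such $s$, the realizable pairs $(t,u)$ are exactly those with $0\le u\le s$ and $2u\le t\le 2s$; distinct triples yield distinct $n$ because $u<q_0$ allows one to recover $u$, $t$, and then $s$ from the base-$q_0$ and base-$q$ digits of $n$. Counting gives $\sum_{u=0}^{s}(2(s-u)+1)=(s+1)^2$ contributions per level, and summing produces $\sum_{s=0}^{q_0-1}(s+1)^2=q_0(q_0+1)(2q_0+1)/6$. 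The inequality $1/6 < a(m)/g(m) < 1/6 + 1/2^{m+1}$ then follows by direct algebra using $g=q_0(2q_0^2-1)$.

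The main obstacle I anticipate is the realization step in the count: showing that every $(s,t,u)$ in the prescribed range actually arises from some nonnegative quadruple $(a,b,c,d)$. This amounts to finding an integer $c\in[\max(0,t-s-u),\,(t-2u)/2]$, which is nonempty precisely because of the inequalities $u\le s$ and $2u\le t\le 2s$. This is exactly the "nearly regular polytope" counting the paper alludes to at the start of the section; once it is set up, the remaining verification is routine arithmetic.
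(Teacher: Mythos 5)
Your proposal is correct, and it follows the paper's reduction exactly up to the identity $a(m)=\ell((g-1)P_\infty)$: both arguments use the unique decomposition $\omega=(f^2+g^2y)\,dy$, equation~(\ref{Ecartier}), and Lemma~\ref{dy} to identify $\ker\car$ with the image of $L((g-1)P_\infty)$ under $f\mapsto f^2\,dy$. Where you diverge is the count. The paper evaluates $\ell((g-1)P_\infty)$ by counting elements of the explicit basis $\mathcal{B}$ of Proposition~\ref{basis} with pole order at most $g-1$, splitting on $b\in\{0,1\}$ and observing that the inequalities $q_0-1=(g-1)/v_{h_2}<(g-1)/v_{h_1}<(g-1)/v_y<q_0$ collapse the two resulting polytopes to the simplices $a+c+d\leq q_0-1$ and $a+c+d\leq q_0-2$, giving $\sum\binom{i}{2}+\sum\binom{i}{2}=\sum_{i=1}^{q_0}i^2$. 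You instead count elements of the Weierstrass semigroup $SG$ directly via the parametrization $n=sq+tq_0+u$ with $s=a+b+c+d$, $t=b+2c+2d$, $u=d$; your realizability check (nonemptiness of the interval $[\max(0,t-s-u),\lfloor(t-2u)/2\rfloor]$), your injectivity-via-digits argument, and your per-level count $(s+1)^2$ are all valid, and the bound $s\leq q_0-1$ is exactly the condition $n\leq g-1$ at the extremes. Your route buys independence from Proposition~\ref{basis} (you never need the explicit basis of $H^0(S_m,\Omega^1)$, only Lemma~\ref{poleorder}, Lemma~\ref{dy}, and the standard fact $\ell(nP)=\#\{k\in H(P):k\leq n\}$), at the cost of the more involved $(s,t,u)$ bookkeeping; the paper's route is shorter because the basis and Proposition~\ref{semithm} have already done the semigroup work. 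The one point you should make explicit is that $SG$ really is the Weierstrass semigroup at $P_\infty$ in the range you use: this follows from Lemma~\ref{poleorder} (which gives $SG\subseteq H(P_\infty)$) together with Proposition~\ref{semithm} and the Weierstrass gap theorem, which force $SG\cap[0,2g-2]=H(P_\infty)\cap[0,2g-2]$. The concluding inequality checks out by the algebra you indicate.
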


\begin{proof}
Recall from Section~\ref{preliminaries} that $a(m)$ is the dimension of the kernel of $\car$
on $\Hnot$.
By equation~(\ref{Ecartier}), $a(m)$ is the dimension of the vector space of regular differentials of the form $f^2\,dy$. Since $f^2$ can have a pole only at $P_{\infty}$, and since the order of the pole can be at most $2g-2$, we see that $a(m)=\ell((g-1)P_{\infty})$. Moreover, squaring is a homomorphism, so
\[
\ell((g-1)P_{\infty})=\#\left\{\omega\in\mathcal{B}\mid v_{\infty}(\omega)\leq g-1\right\}.
\]
By Section~\ref{Colin}, this number is exactly
\[
\#\left\{(a,b,c,d) \in \mathcal{E} \mid v_ya+v_zb+v_{h_1}c+v_{h_2}d\leq g-1
%\\0\leq a,\,0\leq b\leq 1,\,0\leq c,d\leq q_0-1}
\right\};
\]
here we use the notation $\mathcal{E}$ as we did in Proposition~\ref{basis}.
Recall that $b \in \{0,1\}$. 
When $b = 0$, we must count $\{(a,c,d)\in\mathbb{N}^3\mid a+c+d\leq q_0-1\}$. This follows from the fact that that
\[
q_0-1=\frac{g-1}{v_{h_2}}<\frac{g-1}{v_{h_1}}<\frac{g-1}{v_y}<q_0.
\]
For $b = 1$, we must count $\{(a,c,d)\in\mathbb{N}^3\mid a+c+d\leq q_0-2\}$ since
\[
q_0-2<\frac{g-1-v_z}{v_{h_2}}<\frac{g-1-v_z}{v_{h_1}}<\frac{g-1-v_z}{v_y}<q_0-1.
\]
Using these two facts, we obtain
\begin{align*}
a(m)
=&\#\left\{(a,c,d)\in\mathbb{N}^3\mid a+c+d\leq q_0-1\right\} \\ 
 & \quad + \#\left\{(a,c,d)\in\mathbb{N}^3\mid a+c+d\leq q_0-2\right\}\\
=&\sum_{i=2}^{q_0+1}{\binom{i}{2}}+\sum_{i=2}^{q_0}{\binom{i}{2}}\\
=&1+\sum_{i=2}^{q_0}{\left(\binom{i+1}{2}+\binom{i}{2}\right)}\\
=&\sum_{i=1}^{q_0}{i^2},
\end{align*}
as desired.

To prove the second statement, simply note that
\[
\frac{1}{6}<\frac{q_0(q_0+1)(2q_0+1)}{6q_0(q-1)}=\frac{1}{6}\cdot\frac{q_0^2+\frac{3}{2}q_0+\frac{1}{2}}{q_0^2-\frac{1}{2}}<\frac{1}{6}\left(1+\frac{3}{q_0}\right).
\]
\end{proof}

\subsection{Open questions} \label{Corollaries}

Here are two open questions about $\Jac(S_m)$.

\begin{question} \label{Q1}
What is the decomposition of $\Jac(S_m)$ into indecomposable principally polarized abelian varieties?
\end{question}

Theorem~\ref{maintheorem} gives partial information about Question~\ref{Q1}, namely an upper bound on the number of factors appearing in the decomposition, because of the following fact.

\begin{lemma} \label{decompose}
Suppose $A$ is a principally polarized abelian variety with $p$-rank $0$ and $a$-number $a$.
If $A$ decomposes as the direct sum of $t$ principally polarized abelian varieties, then $t \leq a$.
\end{lemma}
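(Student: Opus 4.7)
The plan is to exploit additivity of the $a$-number (and of the $p$-rank) under direct sum decompositions of abelian varieties, combined with the standard inequality $1 \le a(B) + r(B)$ that holds for any positive-dimensional abelian variety $B$, as recalled in Section~\ref{preliminaries}.

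First, I would establish additivity. Suppose $A \sim A_1 \oplus \cdots \oplus A_t$ as principally polarized abelian varieties, with each $A_i$ of positive dimension. Since the $p$-torsion group scheme respects direct sums, $A[p] \cong \bigoplus_{i=1}^t A_i[p]$, and therefore
\[
\Hom(\alpha_p, A[p]) \cong \bigoplus_{i=1}^t \Hom(\alpha_p, A_i[p]), \qquad \Hom(\mu_p, A[p]) \cong \bigoplus_{i=1}^t \Hom(\mu_p, A_i[p]).
\]
Taking dimensions gives $a(A) = \sum_i a(A_i)$ and $r(A) = \sum_i r(A_i)$.

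Next, I would use the $p$-rank-zero hypothesis. Since $r(A) = 0$ and $r(A) = \sum_i r(A_i)$ with each summand nonnegative, each $A_i$ has $p$-rank zero. Invoking the lower bound $1 \le a(A_i) + r(A_i)$ for each positive-dimensional factor, I conclude $a(A_i) \ge 1$ for every $i$. Summing,
\[
a = a(A) = \sum_{i=1}^t a(A_i) \ge t,
\]
which is the required inequality.

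The argument is essentially bookkeeping once additivity is in hand, so there is no serious obstacle; the only point worth being careful about is that the decomposition is stated as a direct sum of principally polarized abelian varieties (so each factor is positive-dimensional and the inequality $1 \le a(A_i)+r(A_i)$ applies). The principal polarization itself plays no further role in the proof beyond ensuring the factors are honest abelian varieties to which the Dieudonné-module inequality applies.
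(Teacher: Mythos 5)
Your proof is correct and follows essentially the same route as the paper: each factor inherits $p$-rank $0$, hence has $a$-number at least $1$ by the inequality $1 \leq a + r$, and summing over the $t$ factors gives $a \geq t$. You simply make explicit the additivity of the $a$-number and $p$-rank under direct sums, which the paper leaves implicit.
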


\begin{proof}
Write $A \simeq \oplus_{i=1}^t A_i$ where each $A_i$ is a principally polarized abelian variety.
For $1 \leq i \leq t$, consider the $p$-torsion group scheme $A_i[p]$.  
The $a$-number of $A_i[p]$ is at least $1$ since its $p$-rank is $0$.
Thus the $a$-number of $A$ is at least $t$.
\end{proof}

To state the second question, we need some more notation.

The Ekedahl-Oort type of a principally polarized abelian variety $A$ over $k$ 
is defined by the interaction between 
the Frobenius $F$ and Verschiebung $V$ operators on the $p$-torsion group scheme $A[p]$.
It determines the isomorphism class of $A[p]$ and its invariants such as the $a$-number.
To define the Ekedahl-Oort type, 
recall that the isomorphism class of a symmetric $BT_1$ group scheme ${\mathbb G}$ over $k$
can be encapsulated into combinatorial data.
This topic can be found in~\cite{O:strat}.
If ${\mathbb G}$ has rank $p^{2g}$, 
then there is a \emph{final filtration} $N_1 \subset N_2 \subset \cdots \subset N_{2g}$ 
of ${\mathbb G}$ as a $k$-vector space
which is stable under the action of $V$ and $F^{-1}$ such that $i=\dim{(N_i)}$.
The {\it Ekedahl-Oort type} of ${\mathbb G}$, also called the {\it final type},
is $\nu=[\nu_1, \ldots, \nu_r]$ where ${\nu_i}=\dim{(V(N_i))}$.
The Ekedahl-Oort type of ${\mathbb G}$ is canonical, even if the final filtration is not.

There is a restriction $\nu_i \leq \nu_{i+1} \leq \nu_i +1$ on the final type.
Moreover, all sequences satisfying this restriction occur.   
This implies that there are $2^g$ isomorphism classes of symmetric $BT_1$ group schemes of rank $p^{2g}$.  The $p$-rank is $\max{\{i \mid \nu_i=i\}}$ and the $a$-number equals $g-\nu_g$. 

\begin{question} \label{Q2}
What is the Ekedahl-Oort type of $\Jac(S_m)[2]$?
Equivalently, what is the covariant Dieudonn\'e module of $\Jac(S_m)[2]$?
\end{question}

Theorem~\ref{maintheorem} gives partial information about Question~\ref{Q2}, by limiting the possible final types.
For the group scheme $\Jac{\left(S_m\right)}[2]$, the Ekedahl-Oort type satisfies that
$\nu_1 = 0$ and $\nu_g = q_0(10q_0+7)(q_0-1)/6$. 
In particular, $\Jac(S_m)$ is not superspecial since $a(m) \not = g(m)$.
This implies that $\Jac(S_m)$ is not isomorphic to the product of supersingular elliptic curves;
it is only isogenous to the product of supersingular elliptic curves.

In the next example, we give some more information about the Ekedahl-Oort type of $\Jac(S_1)[2]$ (the case $m=1$).

\begin{example}
If $m=1$ then $q_0=2$, $q=8$, and $g=14$.
By Section~\ref{Holley}, the image of $\car$ on $\Hnot$ is spanned by 
the nine $1$-forms 
\[\{dy, ydy, h_1dy, y^2dy, yh_1dy, yh_2, (z+y^3)dy, h_1h_2dy, y^2zdy \}.\]
The image of $\car^2$ on $\Hnot$ is spanned by 
the four $1$-forms 
\[\{dy, y^2dy, (z+y^3)dy, (h_1+y^2z)dy\}.\]
Also $\car^3$ trivializes $\Hnot$.
Thus $\nu_1=\nu_2=\nu_3=\nu_4=0$, and $\nu_9=4$, and $\nu_{14}=9$.
The combinatorial restrictions on the final type imply that 
$\nu_{10}=5$, $\nu_{11}=6$, $\nu_{12}=7$, and $\nu_{13}=8$.
This leaves only five possibilities for the final type, and thus for the isomorphism 
class of $\Jac(S_1)[2]$.
\end{example}

\bibliographystyle{amsplain}
\bibliography{summergradSuzuki}

\providecommand{\bysame}{\leavevmode\hbox to3em{\hrulefill}\thinspace}
\providecommand{\MR}{\relax\ifhmode\unskip\space\fi MR }
% \MRhref is called by the amsart/book/proc definition of \MR.
\providecommand{\MRhref}[2]{%
  \href{http://www.ams.org/mathscinet-getitem?mr=#1}{#2}
}
\providecommand{\href}[2]{#2}
\begin{thebibliography}{10}

\bibitem{elkinpries}
Arsen Elkin and Rachel Pries, \emph{Ekedahl-oort strata of hyperelliptic curves
  in characteristic 2}, arXiv:1007.1226.

\bibitem{FTunique}
Rainer Fuhrmann and Fernando Torres, \emph{On {W}eierstrass points and optimal
  curves}, Rend. Circ. Mat. Palermo (2) Suppl. (1998), no.~51, 25--46.
  \MR{1631013 (99e:11081)}

\bibitem{GS91}
Arnaldo Garc{\'{\i}}a and Henning Stichtenoth, \emph{Elementary abelian
  {$p$}-extensions of algebraic function fields}, Manuscripta Math. \textbf{72}
  (1991), no.~1, 67--79. \MR{1107453 (92j:11139)}

\bibitem{GK08}
Massimo Giulietti and G{\'a}bor Korchm{\'a}ros, \emph{On automorphism groups of
  certain {G}oppa codes}, Des. Codes Cryptogr. \textbf{47} (2008), no.~1-3,
  177--190. \MR{2375466 (2009d:94156)}

\bibitem{GKT}
Massimo Giulietti, G{\'a}bor Korchm{\'a}ros, and Fernando Torres,
  \emph{Quotient curves of the {S}uzuki curve}, Acta Arith. \textbf{122}
  (2006), no.~3, 245--274. \MR{2239917 (2007g:11069)}

\bibitem{GP05}
Darren Glass and Rachel Pries, \emph{Hyperelliptic curves with prescribed
  {$p$}-torsion}, Manuscripta Math. \textbf{117} (2005), no.~3, 299--317.
  \MR{2154252 (2006e:14039)}

\bibitem{H92}
Johan~P. Hansen, \emph{Deligne-{L}usztig varieties and group codes}, Coding
  theory and algebraic geometry ({L}uminy, 1991), Lecture Notes in Math., vol.
  1518, Springer, Berlin, 1992, pp.~63--81. \MR{1186416 (94e:94024)}

\bibitem{HansenStich}
Johan~P. Hansen and Henning Stichtenoth, \emph{Group codes on certain algebraic
  curves with many rational points}, Appl. Algebra Engrg. Comm. Comput.
  \textbf{1} (1990), no.~1, 67--77. \MR{1325513 (96e:94023)}

\bibitem{KR89}
E.~Kani and M.~Rosen, \emph{Idempotent relations and factors of {J}acobians},
  Math. Ann. \textbf{284} (1989), no.~2, 307--327. \MR{1000113 (90h:14057)}

\bibitem{Kraft}
H.~Kraft, \emph{Kommutative algebraische $p$-gruppen (mit anwendungen auf
  $p$-divisible gruppen und abelsche variet\"aten)}, manuscript, University of
  Bonn, September 1975, 86 pp.

\bibitem{LO}
K.-Z. Li and F.~Oort, \emph{Moduli of supersingular abelian varieties}, Lecture
  Notes in Mathematics, vol. 1680, Springer-Verlag, Berlin, 1998. \MR{MR1611305
  (99e:14052)}

\bibitem{M:group}
B.~Moonen, \emph{Group schemes with additional structures and {W}eyl group
  cosets}, Moduli of abelian varieties (Texel Island, 1999), Progr. Math., vol.
  195, Birkh\"auser, Basel, 2001, pp.~255--298. \MR{MR1827024 (2002c:14074)}

\bibitem{O:strat}
F.~Oort, \emph{A stratification of a moduli space of abelian varieties}, Moduli
  of abelian varieties (Texel Island, 1999), Progr. Math., vol. 195,
  Birkh\"auser, Basel, 2001, pp.~345--416. \MR{2002b:14055}

\bibitem{Stich}
Henning Stichtenoth, \emph{Algebraic function fields and codes}, second ed.,
  Graduate Texts in Mathematics, vol. 254, Springer-Verlag, Berlin, 2009.
  \MR{2464941 (2010d:14034)}

\bibitem{TVN}
Michael Tsfasman, Serge Vl{\u{a}}du{\c{t}}, and Dmitry Nogin, \emph{Algebraic
  geometric codes: basic notions}, Mathematical Surveys and Monographs, vol.
  139, American Mathematical Society, Providence, RI, 2007. \MR{2339649
  (2009a:94055)}

\end{thebibliography}

Holley Friedlander\\
University of Massachusetts, Amherst\\
Amherst, MA 01003\\
holleyf@math.umass.edu

\bigskip

Derek Garton\\
University of Wisconsin--Madison\\
Madison, WI 53706\\
garton@math.wisc.edu

\bigskip

Beth Malmskog\\
Wesleyan University\\
Middletown, CT 06457\\
emalmskog@wesleyan.edu

\bigskip

Rachel Pries\\
Colorado State University\\
Fort Collins, CO 80521\\
pries@math.colostate.edu

\bigskip

Colin Weir\\
%Mathematical Sciences 368
%612 Campus Place N.W.
University of Calgary\\
%2500 University Drive NW
Calgary, AB, Canada\\
%T2N 1N4
cjweir@ucalgary.ca

\end{document}